\providecommand{\U}[1]{\protect\rule{.1in}{.1in}}
\newtheorem{theorem}{Theorem}
\theoremstyle{plain}
\newtheorem{corollary}{Corollary}
\newtheorem{lemma}{Lemma}
\newtheorem{proposition}{Proposition}
\newtheorem{remark}{Remark}
\numberwithin{equation}{section}
\begin{document}
\title[polynomial martingales]{Markov processes, polynomial martingales and orthogonal polynomials}
\author{Pawe\l \ J. Szab\l owski}
\address{Department of Mathematics and Information Sciences,\\
Warsaw University of Technology\\
ul Koszykowa 75, 00-662 Warsaw, Poland}
\email{pawel.szablowski@gmail.com}
\thanks{The author is deeply grateful to the unknown referee for his useful
suggestions and general remarks and also for his enormous patience in pointing
out 171 linguistic mistakes, punctation errors and typos, All this enabled
substantial improvement of the paper.}
\date{June 2014}
\subjclass[2000]{Primary 60J35, 60G46; Secondary 60J99, 60G44}
\keywords{Markov processes, martingales, reversed martingales, orthogonal polynomials,
harnesses, quadratic harnesses.}

\begin{abstract}
We study general properties for the family of stochastic processes with
polynomial regression property, that is that every conditional moment of the
process is a polynomial. It turns out that then there exists a family of
polynomial martingales $\left\{  M_{n}(X_{t},t)\right\}  _{n\geq1}$ that
contains complete information on the distribution (both marginal and
transitional) of the process.

We specify conditions expressed in terms of $M_{n}^{\prime}s$ under which a
given process has independent increments and further is a Levy process,
contains reversed martingales, is a harness or quadratic harness. We also give
conditions under which some of these martingales are also reversed martingales.

\end{abstract}
\maketitle

\section{Introduction}

We study a subclass of one dimensional Markov processes $\mathbf{X\allowbreak
=\allowbreak}\left(  X_{t}\right)  _{t\in I}$ defined on a finite or infinite
segment that has the property that all its conditional moments of degree say
$n$ are polynomials of degree not exeeding $n$. Poisson, Wiener,
Ornstein--Uhlenbeck processes or more generally $q-$Wiener and $(\alpha$%
,$q)-$OU process (described in detail in \cite{Szab-OU-W} or briefly in
Subsection 2.1 of \cite{SzablPoly}) are examples of such processes. Similar
approach, using polynomials to derive some properties of stochastic processes,
was applied by Schoutens, and Teugels in \cite{SchTeu98} to study L\'{e}vy
processes. Our approach is general, hence applicable to all Markov processes
that have marginal distributions identifiable by moments.

To be more specific let us assume the following:

Let $\mathbf{X\allowbreak=\allowbreak}(X_{t})_{t\in I}$ be a real stochastic
process defined on some probability space $(\Omega$,$\mathcal{F}$,$P)$ where
$I\allowbreak=\allowbreak\lbrack l$,$r]$ is some finite or infinite segment of
a real line. Cases $l\allowbreak=\allowbreak-\infty$ or $r\allowbreak
=\allowbreak\infty$ are allowed. Let us also assume that $\forall t\in I$ :
$\operatorname*{supp}X_{t}$ contains infinite number of points and that
$\exists\alpha>0$,$\forall t\in I:\mathbb{E}\exp(\alpha\left\vert
X_{t}\right\vert )<\infty$. It is known that then, the marginal measure $\mu$
is identifiable by moments. It turns out that there exist slightly milder
conditions assuring this. For details see e.g. \cite{Akhizer65},
\cite{Chih79}, \cite{Sim05}, \cite{Sim98}.

Let us denote also \newline%
\[
\mathcal{F}_{\leq s}\allowbreak=\allowbreak\sigma(X_{t}:t\in\lbrack
l,s]),\mathcal{F}_{\geq s}\allowbreak=\allowbreak\sigma(X_{t}:t\in\lbrack
s,r])
\]
and
\[
\mathcal{F}_{s,t}\allowbreak=\allowbreak\sigma\left(  X_{v}:v\notin(s,t),v\in
I\right)  .
\]

Moreover, let us assume that $\exists N:\forall0<$ $n\leq N;$ $s_{i}\in I$,
$s_{i}\neq s_{j}$, for $i\neq j$ and $i$, $j\allowbreak=\allowbreak1,\ldots
,n$, the matrix $[\operatorname*{cov}(X_{s_{i}}$,$X_{s_{j}})]_{i,j=1,\ldots
,n}$ is non-singular. Processes satisfying these assumptions will be called
totally linearly dependent of degree $N$ (briefly $N-$TLD).

Notice that processes which for every $t\in I$ are constant i.e.
$X_{t}\allowbreak=\allowbreak X$ for some random variable $X$ are also not TLD.

We will also assume that $\exists N:\forall m$, $j\leq N:\mathbb{E}X_{t}%
^{m}X_{s}^{j}$ are continuous functions of $t$, $s\in I$ at least for $s$
$\allowbreak=\allowbreak t$. Such processes will be called mean-square
continuous of degree $N$ (briefly $N-$MSC).

Let us remark that sequence of independent random variables indexed by some
discrete linearly ordered set are not MSC.

By $L_{2}(t)$ let us denote a space spanned by real square integrable
functions with respect to one-dimensional distribution of $X_{t}$. By our
assumptions in $L_{2}(t)$ there exists a set of orthogonal polynomials that
constitute base of this space.

Thus, the class of Markov processes that we will consider is a class of
stochastic processes that are $N-$TLD and $N-$MSC and moreover, satisfying the
following condition:%

\begin{equation}
\exists N:\forall N\geq n\geq1,s\leq t:\mathbb{E}(X_{t}^{n}|\mathcal{F}_{\leq
s})\allowbreak=\allowbreak R_{n}(X_{s},s,t), \label{p_reg}%
\end{equation}
where $R_{n}(x,s,t)$ is a polynomial of degree not exceeding $n$ in $x$. We
will call this class of processes Markov processes with polynomial regression
of degree $N$ (briefly $N-$MPR process). If $N$ can be taken $\infty$ then, we
will talk of MPR processes. More precisely we should call this class $N-$rMPR
class i.e. right Markov processes with polynomial regression. However, until
we will consider left (with the obvious meaning) class of Markov processes we
will use the name MPR class.

We will study general properties of Markov processes from MPR class.

As shown in \cite{SzablPoly} then, under our assumptions there exists a family
of polynomial martingales $\left\{  M_{n}(X_{t},t)\right\}  _{n\geq0}$ with
$M_{0}(X_{t}$,$t)\allowbreak=\allowbreak1$. It should be underlined that in
the two akin papers written by the author \cite{SzablPoly} and \cite{SzabStac}
there are some additional assumptions based on which the results of these
papers are obtained, e.g. in \cite{SzablPoly} we assume additionally that
polynomial martingales $\left\{  M_{n}\right\}  $ constitute also the family
of orthogonal polynomials. In \cite{SzabStac} we additionally assume that the
analyzed process is stationary.

In this paper we study the most general case. The only assumptions we are
making are those mentioned above. The additional assumptions are imposed to
obtain some additional properties. In particular, by imposing certain
conditions on these polynomial martingales we characterize processes with
independent increments (among them L\'{e}vy processes), harnesses, and
quadratic harnesses. We also specify conditions for a martingale $M_{n}(X_{s}
$,$s)$ to be a reversed martingale or for the set of polynomial martingales
$\left\{  M_{n}(X_{t},t)\right\}  _{n\geq0}$ to be the set of orthogonal martingales.

To see the role of these polynomials let us consider the space $L(\Omega
,\mathcal{F},\mu)$ of measurable real functions such that $\int\exp
(\alpha|f|)d\mu$ is finite for some $\alpha>0$ for all $f\in L$. Then, the
moments i.e. the numbers $\int f^{n}d\mu$ for $n\geq0$ do exist and moreover,
their values enable us to regain the so-called distribution of the function
$f$ i.e. the function $F_{f}(t)\allowbreak=\allowbreak\mu\left\{
\omega:f\left(  \omega\right)  \leq t\right\}  $ for all $t$. In other words
the knowledge of polynomials $\left\{  p_{n}(x)\right\}  $ in the space $L$
such that degree of $p_{n}$ is $n$ and $\int p_{n}(x)d\mu(x)\allowbreak
=\allowbreak0$ allow firstly to build (using the so-called Gram--Schmidt
procedure ) a family of orthogonal polynomials and secondly find the measure
that makes these polynomials orthogonal. The algorithm is rather complicated.
For details see e.g. the excellent monographs \cite{Akhizer65}, \cite{Chih79},
\cite{Sim05} or \cite{Sim98}.

Hence, taking the above into account, the family of polynomial martingales
$\left\{  M_{n}(X_{t},t)\right\}  _{n\geq0}$ contains complete information
about the so-called transitional distribution. More precisely, given
polynomials $p_{n}(x,y,s,t)\allowbreak=\allowbreak M_{n}(x,t)-M_{n}(y,s)$,for
$s<t$, $n\geq1$ we can (by the so-called the Gram--Schmidt procedure) get a
family of polynomials orthogonal with respect to the so-called transitional
measure $\mu(A|s,y,t)$ defined by $\mu(A|s,y,t)\allowbreak=\allowbreak
P(X_{t}\in A|X_{s}\allowbreak=\allowbreak y)$ and consequently, characterize
this transitional measure completely.

Further, we will assume that $\forall n\geq1$,$~\mathbb{E}M_{n}(X_{t}%
$,$t)\allowbreak=\allowbreak0$, hence we get linearly independent family of
polynomials from which (again by Gram--Schmidt procedure) we can get family of
polynomials that are orthogonal with respect to marginal distribution of the
process at time $t$.

In other words, a family of polynomial martingales contains complete
information on the stochastic process that generated these martingales.

It is an interesting question if polynomials $\left\{  M_{n}\right\}  $ can be
made identical with the family of orthogonal polynomials. We will show in the
sequel that it is not always possible.

The paper is organized as follows. In the next section we study general
properties of these polynomial martingales i.e. we study what conditions have
to be imposed on them to get process with independent increments, a reversed
martingale or make them orthogonal. In Section \ref{HAR} we study necessary
conditions for the process $\mathbf{X}$ to be a harness and we also specify
sufficient conditions for this to happen. In Subsection \ref{QHAR} we study
necessary conditions to guarantee that the process is a quadratic harness.
Longer proofs are shifted to Section \ref{dow}.

\section{General properties\label{gen}}

Let us assume that we deal with the Markov process $\mathbf{X\allowbreak
=\allowbreak}\left(  X_{t}\right)  _{t\in I}$ defined on a certain probability
space $(\Omega,\mathcal{F},P)$ with real values that has the property that its
every conditional moment of degree $n$ conditioned upon the past is a
polynomial of degree $n$. $I$ denotes an index set, usually $\mathbb{N\cup
}\left\{  0\right\}  $, $\mathbb{R}^{+}$, $\mathbb{Z}$, $\mathbb{R}$. What is
important about $I$ is that there exists a total order in it i.e. the binary
relation that is antisymmetric, transitive and total.

As it follows from \cite{SzablPoly} the property of being MPR process is
equivalent to the fact that there exist a family of polynomial martingales
$\left\{  M_{n}\left(  X_{t},t\right)  \right\}  _{n\geq0,t\in I}$ that is
\[
\mathbb{E}\left(  M_{n}(X_{t},t\right)  |\mathcal{F}_{\leq s})\allowbreak
=\allowbreak M_{n}(X_{s},s),
\]
for all $n\geq0$ and $s<t$. For the sake of brevity of notation we will
suppress dependence of $M_{n}$ on $X_{t}$ and will abbreviate $M_{n}(X_{s},s)$
to $M_{n}(s)$.

Let us also write $m_{n}(s)\allowbreak$ for $\allowbreak\mathbb{E}M_{n}%
^{2}(s)$. From the theory of martingales it follows that the functions
$\left\{  m_{n}\right\}  _{n\geq0}$ are nondecreasing. We will additionally
assume that all functions $m_{n}(s)$ are continuous and that $\lim
_{s\longrightarrow l}m\left(  s\right)  \allowbreak=\allowbreak0$, where $l$
denotes the left hand side boundary of the index set $I$. Hence, from these
two additional assumptions it follows that $\mathbb{E}M_{n}(t)\allowbreak
=\allowbreak0$.

\begin{proposition}
$\forall n>0$,$s\in I:M_{n}^{2}(s)\allowbreak=\allowbreak m_{n}(s)\allowbreak
+\allowbreak\sum_{j=1}^{2n}c_{j}(s)M_{j}(s)$, for some continuous functions
$c_{j}(s)$, $j\allowbreak=\allowbreak1$,$\ldots$,$2n$.
\end{proposition}

\begin{proof}
It follows the fact that martingales $M_{n}$ are polynomials such that $M_{n}
$ is a polynomial of degree $n$. Hence, is $M_{n}^{2}$ is a polynomial of
degree $2n$ and the result yields by uniqueness of the polynomial expansion.
\end{proof}

Notice that from the listed above assumptions it follows that for $\forall
t\in I$, the moment generating function of $X_{t}$ exists in some neighborhood
of $0$. Consequently we deduce that every measurable function $g(x)$ such that
$\mathbb{E}\left\vert g(X_{t})\right\vert ^{2}<\infty$ can be expanded in a
Fourier series of polynomials that are orthogonal with respect to marginal
measure of $X_{t}$.

By \cite{SzablPoly} we know that the considered class of random processes
contains processes with independent increments (II-class) with all moments
existing. Below we have characterization of II class within the MPR class of
random processes.

\begin{proposition}
\label{nzal}The MPR process has independent increments iff there exist a
sequence of continuos functions $\left\{  g_{k}(t)\right\}  _{k\geq1}$ defined
on $I$ such that martingales $\left\{  M_{n}\left(  t\right)  \right\}
_{n\geq1}$ satisfy the following recurrence:%
\begin{equation}
M_{n}(t)\allowbreak=\allowbreak X_{t}^{n}-g_{n}(t)-\sum_{j=1}^{n-1}\binom
{n}{j}g_{j}(t)M_{n-j}(t), \label{ind}%
\end{equation}
for $n\geq1$. Then, $g_{n}(t)\allowbreak=\allowbreak\mathbb{E}X_{t}^{n}$.
\end{proposition}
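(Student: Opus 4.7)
The plan is to convert the combinatorial recursion (\ref{ind}) into a generating-function identity. First, taking unconditional expectation in (\ref{ind}) and using $\mathbb{E}M_k(t)=0$ for $k\geq 1$ yields $g_n(t)=\mathbb{E}X_t^n$ by induction on $n$; thus the identification of $g_n$ asserted in the proposition is automatic once the recursion is established.

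Next, rewriting (\ref{ind}) with the conventions $g_0(t)\equiv 1$ and $M_0(t)\equiv 1$ turns it into the binomial convolution
\begin{equation*}
X_t^n = \sum_{k=0}^{n}\binom{n}{k}g_k(t)M_{n-k}(t).
\end{equation*}
Introducing the exponential generating functions $A(u,t)=\sum_{k\geq 0}g_k(t)u^k/k!$ and $N(u,t)=\sum_{n\geq 0}M_n(t)u^n/n!$, this is equivalent to $e^{uX_t}=A(u,t)N(u,t)$, i.e.
\begin{equation*}
N(u,t)=\frac{e^{uX_t}}{\mathbb{E}e^{uX_t}}.
\end{equation*}
The standing hypothesis that the moment generating function of $X_t$ exists in a neighborhood of $0$ guarantees convergence of both series for sufficiently small $|u|$.

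For the forward direction, if $\mathbf{X}$ has independent increments then
\begin{equation*}
\mathbb{E}(e^{uX_t}\mid\mathcal{F}_{\leq s})=e^{uX_s}\mathbb{E}e^{u(X_t-X_s)}=e^{uX_s}\cdot\frac{\mathbb{E}e^{uX_t}}{\mathbb{E}e^{uX_s}},
\end{equation*}
so $N(u,t)$ is a martingale in $t$ for each fixed $u$ near $0$. Extracting the coefficient of $u^n/n!$ in the identity $e^{uX_t}=A(u,t)N(u,t)$ reproduces (\ref{ind}), and uniqueness of the polynomial martingales associated with an MPR process (from \cite{SzablPoly}) identifies these coefficients with the given family $\{M_n\}$. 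Conversely, assuming (\ref{ind}), the same algebraic identity gives $N(u,t)=e^{uX_t}/A(u,t)$, and exchanging summation with conditional expectation using the martingale property of each $M_n$ yields $\mathbb{E}(N(u,t)\mid\mathcal{F}_{\leq s})=N(u,s)$, whence
\begin{equation*}
\mathbb{E}(e^{u(X_t-X_s)}\mid\mathcal{F}_{\leq s})=\frac{A(u,t)}{A(u,s)}.
\end{equation*}
The right-hand side is deterministic, so the conditional MGF of $X_t-X_s$ coincides with its unconditional MGF; by injectivity of the Laplace transform on a neighborhood of $0$, $X_t-X_s$ is independent of $\mathcal{F}_{\leq s}$, and iterating over a finite partition yields independence of all increments.

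The main obstacle is justifying the exchange of summation and conditional expectation that transports the martingale property between $N(u,t)$ and its polynomial coefficients. The assumption that $\mathbb{E}e^{\alpha|X_t|}<\infty$ for some $\alpha>0$, together with the continuity and growth control of $g_k$ and $m_k$ assumed earlier in the section, supplies the uniform integrability needed to make this termwise manipulation rigorous on a sufficiently small neighborhood of $u=0$.
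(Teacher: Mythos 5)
Your argument is correct, but it takes a genuinely different route from the paper's. You package the binomial recursion (\ref{ind}) into the exponential (Wald) martingale $N(u,t)=e^{uX_t}/\mathbb{E}e^{uX_t}$ and transfer the martingale property between $N$ and its Taylor coefficients, so that both implications reduce to the single observation that $\mathbb{E}(e^{u(X_t-X_s)}|\mathcal{F}_{\leq s})=A(u,t)/A(u,s)$ is deterministic exactly when the increments are independent. The paper instead works coefficient by coefficient: for the direction "independent increments $\Rightarrow$ (\ref{ind})" it expands $(X_t-X_s+X_s)^n$ and manipulates the binomial convolutions of the increment moments $\gamma_n(s,t)$ by hand, and for the direction "(\ref{ind}) $\Rightarrow$ independent increments" it identifies the lower-triangular matrix $V_n(t)=[\binom{i}{j}g_{i-j}(t)]$ as a structural matrix and appeals to \cite{SzablPoly}. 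Your converse is arguably more self-contained and more transparent than the paper's citation of external machinery, and your forward direction replaces the paper's index gymnastics with the one-line factorization $\mathbb{E}e^{uX_t}=\mathbb{E}e^{uX_s}\cdot\mathbb{E}e^{u(X_t-X_s)}$; the price is the analytic care you correctly flag. Two points deserve slightly more than a gesture. First, in the forward direction no interchange of limits is actually needed: equality of the coefficient-wise conditional expectations is a finite polynomial identity for each $n$, so you may treat all generating functions as formal power series there. The interchange is genuinely needed only in the converse, and it does go through: writing $1/A(u,t)=\sum_k h_k(t)u^k/k!$ one gets $\sum_n|M_n(X_t,t)||u|^n/n!\leq H(|u|,t)\,e^{|u||X_t|}$ with $H$ finite for small $|u|$, which is integrable by the standing assumption $\mathbb{E}e^{\alpha|X_t|}<\infty$, so conditional dominated convergence applies. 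Second, your appeal to "uniqueness of the polynomial martingales" is not quite right as stated (a polynomial martingale family is only determined up to constant-coefficient triangular changes of basis); what you actually prove, and what the proposition asserts, is the existence of continuous $g_k$ for which the polynomials defined by (\ref{ind}) form a martingale family, so this step can simply be dropped.
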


\begin{proof}
Since $\mathbb{E}M_{n}(t)\allowbreak=\allowbreak0$, we easily see that
$g_{n}(t)\allowbreak=\allowbreak\mathbb{E}X_{t}^{n}$. Now considering the
lower triangular matrix $V_{n}(t)\allowbreak=\allowbreak\lbrack\binom{i}%
{j}g_{i-j}(t)]_{i,j=0,\ldots,n}$, vectors $\mathbf{X}_{n}\allowbreak
=\allowbreak(1,X_{t},X_{t}^{2},\ldots,X_{t}^{n})^{T}$ and $\mathbf{M}%
_{n}(t)\allowbreak=\allowbreak(1,M_{1}(t),\ldots,M_{n}(t))^{T}$, recurrences
(\ref{ind}) can be expressed in the form $V_{n}(t)\mathbf{M}_{n}%
(t)\allowbreak=\allowbreak\mathbf{X}_{n}$ for all natural $n$. Consequently,
referring to results of \cite{SzablPoly} we deduce that $V_{n}(t)$ is a
structural matrix of the process $\mathbf{X}$\textbf{. }Following the lines of
reasoning presented in \cite{SzablPoly} we deduce that process $\mathbf{X}$
has independent increments.

Conversely, if $\mathbf{X}$ has independent increments then, (\ref{ind}) is
read in the following way
\[
X_{t}^{n}\allowbreak=\allowbreak\sum_{j=0}^{n}\binom{n}{j}g_{n-j}%
(t)M_{j}(t),~\mathbb{E(}X_{t}^{n}|\mathcal{F}_{\leq s})=\sum_{j=0}^{n}%
\binom{n}{j}g_{n-j}(t)M_{j}(s),
\]
and more importantly $\mathbb{E(}(X_{t}-X_{s})^{n}|\mathcal{F}_{\leq s})$ is a
nonrandom quantity, say, equal to $\gamma_{n}(s$,$t)$. On the other hand,
since $\gamma_{n}(s$,$t)\allowbreak=\allowbreak\mathbb{E}(\gamma_{n}(s$,$t))$
we deduce that $\gamma_{n}(s$,$t)\allowbreak=\allowbreak\sum_{j=0}^{n}%
(-1)^{j}\binom{n}{j}\mathbb{E}(X_{t}^{n-j})\mathbb{E}(X_{s}^{j})\allowbreak
=\allowbreak\sum_{j=0}^{n}(-1)^{j}\binom{n}{j}g_{n-j}(t)g_{j}(s)$. Besides,
notice that $\gamma_{n}(s$,$s)\allowbreak=\allowbreak0$. We have%
\begin{gather*}
\mathbb{E}(X_{t}^{n}|\mathcal{F}_{\leq s})\allowbreak=\allowbreak
\mathbb{E(}(X_{t}-X_{s}+X_{s})^{n}|\mathcal{F}_{\leq s})\allowbreak
=\allowbreak\\
=\sum_{j=0}^{n}\binom{n}{j}X_{s}^{j}\gamma_{n-j}(s,t)\allowbreak
=\allowbreak\sum_{j=0}^{n}\binom{n}{j}\gamma_{n-j}(s,t)\sum_{k=0}^{j}\binom
{j}{k}g_{j-k}(s)M_{k}(s)\\
=\sum_{k=0}^{n}\binom{n}{k}M_{k}(s)\sum_{j=k}^{n}\binom{n-k}{j-k}\gamma
_{n-j}(s,t)g_{j-k}(s)\\
=\sum_{k=0}^{n}\binom{n}{k}M_{k}(s)\sum_{l=0}^{n-k}\binom{n-k}{l}%
g_{n-k-l}(s)\sum_{r=0}^{l}(-1)^{r}g_{r}(s)g_{l-r}(t).
\end{gather*}
Now notice that
\begin{align*}
&  \sum_{j=0}^{n}\binom{n}{j}g_{n-j}(s)\sum_{k=0}^{j}(-1)^{j-k}\binom{j}%
{k}g_{k}(t)g_{j-k}(s)\allowbreak\\
&  =\newline\allowbreak\sum_{k=0}^{n}\binom{n}{k}g_{k}(t)\sum_{j=k}%
^{n}(-1)^{j-k}\binom{n-k}{j-k}g_{j-k}(s)g_{n-k-j}(s)\allowbreak=\allowbreak
g_{n}(t),
\end{align*}
$\ $by the properties of $\gamma_{n}(s$,$t)$. Hence, indeed
\[
\mathbb{E}(X_{t}^{n}|\mathcal{F}_{\leq s})\allowbreak\allowbreak
=\allowbreak\sum_{k=0}^{n}\binom{n}{k}M_{k}(s)g_{n-j}(t).
\]

\end{proof}

\begin{corollary}
The MPR process $\mathbf{X}$ is a L\'{e}vy process iff the polynomial
martingales $M_{n}(t)$ satisfy (\ref{ind}) and moreover, the functions
$g_{i}(t)$ satisfy the following condition:
\[
g_{n}(t-s)\allowbreak=\allowbreak\sum_{j=0}^{n}(-1)^{j}\binom{n}{j}%
g_{n-j}(t)g_{j}(s).
\]

\end{corollary}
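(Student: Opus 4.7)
The plan is to combine Proposition \ref{nzal} (which handles independent increments) with the moment formula $\gamma_n(s,t)\allowbreak=\allowbreak\sum_{j=0}^n(-1)^j\binom{n}{j}g_{n-j}(t)g_j(s)$ that was already derived inside the proof of Proposition \ref{nzal}, and then note that a L\'evy process is exactly a process with independent \emph{and} stationary increments.

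First, I would invoke Proposition \ref{nzal} to reduce the problem: since a L\'evy process has independent increments, the recurrence (\ref{ind}) is equivalent to this half of the hypothesis. So I only need to show that, assuming (\ref{ind}) already holds (hence $g_n(t)=\mathbb{E}X_t^n$), stationarity of increments is equivalent to the functional equation
\begin{equation*}
g_n(t-s)\allowbreak=\allowbreak\sum_{j=0}^{n}(-1)^j\binom{n}{j}g_{n-j}(t)g_j(s).
\end{equation*}

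Next, assuming $\mathbf{X}$ is L\'evy with $X_0\allowbreak=\allowbreak0$, stationarity of increments gives $X_t-X_s\stackrel{d}{=}X_{t-s}$, so in particular $\mathbb{E}(X_t-X_s)^n\allowbreak=\allowbreak\mathbb{E}X_{t-s}^n\allowbreak=\allowbreak g_n(t-s)$. On the other hand, independence of $X_t-X_s$ and $X_s$ together with the binomial expansion yields
\begin{equation*}
\mathbb{E}(X_t-X_s)^n\allowbreak=\allowbreak\sum_{j=0}^n(-1)^j\binom{n}{j}\mathbb{E}X_t^{n-j}\mathbb{E}X_s^j\allowbreak=\allowbreak\sum_{j=0}^n(-1)^j\binom{n}{j}g_{n-j}(t)g_j(s),
\end{equation*}
which is precisely the claimed identity (this computation is already in the proof of Proposition \ref{nzal}, where it was denoted $\gamma_n(s,t)$).

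For the converse, suppose (\ref{ind}) holds and the functional equation is satisfied. By Proposition \ref{nzal} the process has independent increments, so the conditional law of $X_t-X_s$ given $\mathcal{F}_{\leq s}$ is the unconditional law, and the computation above shows $\mathbb{E}(X_t-X_s)^n\allowbreak=\allowbreak g_n(t-s)\allowbreak=\allowbreak\mathbb{E}X_{t-s}^n$ for every $n\geq 0$. Since the standing MPR assumption guarantees the moment generating function of $X_t$ exists in a neighborhood of $0$, and independent increments propagate this to the increments $X_t-X_s$, the moments determine the distribution uniquely. Hence $X_t-X_s\stackrel{d}{=}X_{t-s}$ for all $s<t$, i.e.\ the increments are stationary, making $\mathbf{X}$ a L\'evy process.

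The only subtle point is the normalization $X_0\allowbreak=\allowbreak0$: this is compatible with the earlier standing assumption that $m_n(s)\to0$ as $s$ approaches the left endpoint of $I$, so the moments pin down the initial distribution as a point mass (or one renormalizes accordingly). Beyond that, the whole argument is essentially a bookkeeping exercise once the formula for $\gamma_n(s,t)$ from the proof of Proposition \ref{nzal} is at hand, so I do not expect any real obstacle.
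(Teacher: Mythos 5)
Your proof is correct and follows essentially the same route as the paper: invoke Proposition \ref{nzal} to handle independent increments, then identify the displayed functional equation with the moment condition $\mathbb{E}(X_t-X_s)^n=\mathbb{E}X_{t-s}^n$ characterizing stationary increments. The only difference is that you spell out the moment-determinacy step (via the standing MGF assumption) needed for the converse, which the paper leaves implicit.
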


\begin{proof}
By Proposition \ref{nzal} we know that $\mathbf{X}$ is a process with
independent increments with moment functions $g_{n}(t)\allowbreak
=\allowbreak\mathbb{E}X_{t}^{n}$. Now recall that L\'{e}vy processes are
processes with independent increments satisfying the property that
$X_{t}-X_{s}$ has the same distribution as $X_{t-s}$ for all $t>s$
\end{proof}

In the sequel it will be understood that $t\in I$ often without specifying so.
To proceed further, we need the following observation:

\begin{lemma}
\label{aux}Suppose $s\leq u_{1}\leq$,$\ldots$,$\leq u_{n}$. Then,
\[
\mathbb{E(}M_{k_{1}}(u_{1})\ldots M_{k_{n}}(u_{n})|\mathcal{F}_{\leq
s})\allowbreak=\allowbreak\sum_{j=0}^{k_{1}+\ldots+k_{n}}\phi_{j}(u_{1}%
,\ldots,u_{n-1})M_{j}(s),
\]
for some continuous functions $\phi_{j}(u_{1}$,$\ldots$,$u_{n-1})$.
\end{lemma}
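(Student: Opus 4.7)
I would proceed by induction on $n$, the number of factors in the product. The base case $n=1$ is immediate from the martingale property: $\mathbb{E}(M_{k_{1}}(u_{1})|\mathcal{F}_{\leq s}) = M_{k_{1}}(s)$, which fits the claim with $\phi_{k_{1}}\equiv 1$ (there are no $u$-arguments when $n=1$).

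The inductive step rests on an algebraic closure fact that directly generalizes the Remark above: for any $i,j\geq 0$ and any $t\in I$, one has
$$M_{i}(X_{t},t)\,M_{j}(X_{t},t) \;=\; \sum_{l=0}^{i+j} c_{l}^{(i,j)}(t)\,M_{l}(X_{t},t),$$
with $c_{l}^{(i,j)}$ continuous in $t$. The proof mirrors that of the Remark: the left-hand side is a polynomial of degree $i+j$ in $X_{t}$, and since each $M_{l}(\cdot,t)$ is of exact degree $l$ in $X_{t}$, the family $\{M_{l}(\cdot,t)\}_{l=0}^{i+j}$ is a basis of polynomials of degree $\leq i+j$; the expansion is unique, and continuity of the $c_{l}^{(i,j)}$ follows from continuity of the coefficients of the $M_{l}$'s in $t$.

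With this in hand, the inductive step runs as follows. Condition first on $\mathcal{F}_{\leq u_{n-1}}$ using the tower property; since $M_{k_{n}}(\cdot)$ is a martingale and only the last factor is not already $\mathcal{F}_{\leq u_{n-1}}$-measurable, this yields
$$\mathbb{E}\bigl(M_{k_{1}}(u_{1})\cdots M_{k_{n}}(u_{n})\bigm|\mathcal{F}_{\leq s}\bigr) \;=\; \mathbb{E}\bigl(M_{k_{1}}(u_{1})\cdots M_{k_{n-1}}(u_{n-1})\,M_{k_{n}}(u_{n-1})\bigm|\mathcal{F}_{\leq s}\bigr).$$
Apply the closure fact to the two rightmost factors to write
$$M_{k_{n-1}}(u_{n-1})\,M_{k_{n}}(u_{n-1}) \;=\; \sum_{j=0}^{k_{n-1}+k_{n}} c_{j}(u_{n-1})\,M_{j}(u_{n-1}),$$
with continuous $c_{j}$. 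Linearity reduces the problem to $(n-1)$-fold products $M_{k_{1}}(u_{1})\cdots M_{k_{n-2}}(u_{n-2})\,M_{j}(u_{n-1})$ at times $u_{1}<\cdots<u_{n-1}$, to which the inductive hypothesis applies, giving $\sum_{i}\psi_{i}^{(j)}(u_{1},\ldots,u_{n-2})\,M_{i}(s)$ with $i\leq k_{1}+\cdots+k_{n-2}+j$. Collecting terms yields
$$\phi_{i}(u_{1},\ldots,u_{n-1}) \;=\; \sum_{j=0}^{k_{n-1}+k_{n}} c_{j}(u_{n-1})\,\psi_{i}^{(j)}(u_{1},\ldots,u_{n-2}),$$
which is continuous as a product and sum of continuous functions, and the degree bound $i\leq k_{1}+\cdots+k_{n}$ is preserved.

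The main obstacle, and really the only nontrivial point, is establishing the algebraic closure fact together with continuity of the product-expansion coefficients $c_{l}^{(i,j)}(t)$; everything else is tower-property bookkeeping. Since this closure is an immediate extension of the argument already given in the Remark (the squared case $i=j$), I do not expect genuine difficulty, only careful tracking of the degree bounds through the recursion.
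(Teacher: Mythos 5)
Your proposal is correct and follows essentially the same route as the paper: induction on the number of factors, the tower/martingale property to bring the last factor back to an earlier time, and the linear-algebra closure fact that a product $M_{i}(t)M_{j}(t)$ expands in the basis $\{M_{l}(t)\}_{l\leq i+j}$ with continuous coefficients (the generalization of the paper's Remark on $M_{n}^{2}$). Your version merely peels the product from the latest time and is, if anything, more careful with the index bookkeeping than the paper's own argument.
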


\begin{proof}
The proof is by induction. Let us take $n\allowbreak=\allowbreak2$. Then,
$\mathbb{E(}M_{k_{1}}(u_{1})M_{k_{2}}(u_{2})|\mathcal{F}_{\leq s}%
)\allowbreak=\allowbreak\mathbb{E(}M_{k_{1}}(u_{1})M_{k_{2}}(u_{1}%
)|\mathcal{F}_{\leq s})$ since $M_{k_{2}}$ is a martingale. Further, since
$M_{k_{1}}$ and $M_{k_{2}}$ are polynomials we deduce that $M_{k_{1}}%
(u_{1})M_{k_{2}}(u_{1})\allowbreak=\allowbreak\sum_{j=0}^{k_{1}+k_{2}}\phi
_{j}(u_{1})M_{j}(u_{1}) $ by our assumptions concerning polynomials $M_{i}$.
Hence, now we have
\begin{gather*}
\mathbb{E(}M_{k_{1}}(u_{1})\ldots M_{k_{n}}(u_{n})|\mathcal{F}_{\leq
s})\allowbreak=\mathbb{E(}M_{k_{1}}(u_{1})\mathbb{E(}M_{k_{2}}(u_{2})\ldots
M_{k_{n}}(u_{n}))|\mathcal{F}_{\leq u_{1}})|\mathcal{F}_{\leq s})\\
=\allowbreak\sum_{j=0}^{k_{2}+\ldots+k_{n-1}}\phi_{j}(u_{2},\ldots
,u_{n-1})\mathbb{E(}M_{k_{1}}(u_{1})M_{j}(u_{1})|\mathcal{F}_{\leq
s})\allowbreak\\
=\allowbreak\allowbreak\sum_{j=0}^{k_{1}+\ldots+k_{n-1}}\phi_{j}(u_{2}%
,\ldots,u_{n-1})\sum_{m=0}^{j+k_{n}}\phi_{m}(u_{1})M_{m}(s)\\
\mathbb{=}\sum_{m=0}^{k_{n}}M_{m}(s)\sum_{j=0}^{k_{1}+\ldots+k_{n}}\phi
_{m}(u_{1})\phi_{j}(u_{2},\ldots,u_{n-1})\\
+\sum_{m=k_{n}+1}^{k_{1}+\ldots+k_{n}}M_{m}(s)\sum_{j=m-k_{n}}^{k_{1}%
+\ldots+k_{n}}\phi_{m}(u_{1})\phi_{j}(u_{2},\ldots,u_{n-1}).
\end{gather*}

\end{proof}

Let us first deal with the case when these martingales are also reversed martingales.

\begin{proposition}
\label{wst}Suppose that there exists a function $a(s)$ such that
\begin{equation}
a(s)\mathbb{E}\left(  M_{n}(s\right)  |\mathcal{F}_{\geq t})\allowbreak
=\allowbreak a(t)M_{n}(t), \label{rew}%
\end{equation}
for some fixed $n>0$. Then, one can select $a(s)$ to be positive for all $s$
and $a(s)\allowbreak=\allowbreak1/m_{n}(s)$. Moreover,%

\begin{equation}
\forall j\geq0:\mathbb{E}M_{n}(s)M_{j}(s)\allowbreak=\allowbreak\chi
_{j,n}m_{n}(s), \label{rwv_n}%
\end{equation}
for some constants $\chi_{j,n}$. Conversely, if condition (\ref{rwv_n}) holds
then, $\left\{  M_{n}(s)/m_{n}(s)\right\}  $ is a reversed martingale.
\end{proposition}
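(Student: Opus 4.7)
My plan is to prove the two directions separately, exploiting the interplay between the forward martingale property of the $M_n$'s and the hypothesized relation. I expect the converse to be the more delicate part, as it requires a density argument in $L^2(\mathcal{F}_{\geq t})$.

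For the forward direction, the basic trick is to multiply the defining relation (\ref{rew}) by an $\mathcal{F}_{\geq t}$-measurable random variable and take expectation, then unfold the left-hand side by the tower property and the forward martingale property. Taking that random variable to be $M_n(t)$ gives $a(s)\mathbb{E}(M_n(s)M_n(t)) = a(t)m_n(t)$, and the left-hand side reduces by the martingale property to $a(s)\mathbb{E}(M_n(s)\mathbb{E}(M_n(t)|\mathcal{F}_{\leq s})) = a(s)m_n(s)$. Hence $a(s)m_n(s)$ is constant; since (\ref{rew}) is insensitive to multiplying $a$ by a positive constant, I may normalize to $a(s)=1/m_n(s)$, which is positive. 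Taking the random variable to be $M_m(t)$ for arbitrary $m$ and repeating the computation yields $\mathbb{E}(M_n(s)M_m(s))/m_n(s) = \mathbb{E}(M_n(t)M_m(t))/m_n(t)$; the common value is the desired constant $\chi_{m,n}$, which is (\ref{rwv_n}).

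For the converse, I would establish the equivalent identity $\mathbb{E}(M_n(s)\mid\mathcal{F}_{\geq t}) = (m_n(s)/m_n(t))\,M_n(t)$ for $s<t$ by showing both sides have the same inner product with every element of a total family in $L^2(\mathcal{F}_{\geq t})$. Under the moment generating function hypothesis, polynomials in $(X_{u_1},\dots,X_{u_j})$ with $t\leq u_1<\cdots<u_j$ are dense in $L^2$ of the future; since the change of basis from monomials $X_u^k$ to the martingale polynomials $M_k(u)$ is triangular and invertible, it suffices to test against products $\prod_i M_{k_i}(u_i)$. Lemma \ref{aux} rewrites $\mathbb{E}(\prod_i M_{k_i}(u_i)\mid\mathcal{F}_{\leq v})$ as $\sum_m \phi_m(u_1,\dots,u_{j-1})M_m(v)$ with coefficients independent of $v$. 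Applying this with $v=s$ and with $v=t$ and then invoking (\ref{rwv_n}) to evaluate each $\mathbb{E}(M_n(v)M_m(v))=\chi_{m,n}m_n(v)$, the two expectations to be compared factor as the same sum $\sum_m \phi_m\,\chi_{m,n}$ multiplied by $m_n(s)$ and $m_n(t)$ respectively, and the ratio $m_n(s)/m_n(t)$ emerges.

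The main obstacle is not algebraic but rests in the density step of the converse: one must justify that the $L^2$-closure of the linear span of products $\prod_i M_{k_i}(u_i)$ with $u_i\geq t$ is all of $L^2(\mathcal{F}_{\geq t})$. This combines the moment generating function assumption (giving density of polynomials in the coordinates $X_u$) with the triangular, invertible change of basis between these monomials and the martingale polynomials; once this is granted, everything else is bookkeeping via Lemma \ref{aux} and (\ref{rwv_n}).
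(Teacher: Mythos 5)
Your proof is correct and follows essentially the same route as the paper: the forward direction by pairing (\ref{rew}) with $M_m(t)$ and collapsing via the tower property and the forward martingale property, and the converse by testing against products $\prod_i M_{k_i}(u_i)$ of future martingale polynomials, reducing them with Lemma \ref{aux}, and invoking (\ref{rwv_n}) together with a density argument in $L^2(\mathcal{F}_{\geq t})$. Your write-up of the converse is in fact slightly more explicit than the paper's (which elides the $\chi_{j,n}$ factors in the intermediate sum), but the underlying argument is the same.
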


\begin{proof}
Multiplying both sides of (\ref{rew}) by $M_{j}(t)$ and taking expectations we
get on the left hand side: $a(s)\mathbb{E}(\mathbb{E}\left(  M_{n}(s\right)
|\mathcal{F}_{\geq t})\allowbreak M_{j}(t))\allowbreak=\allowbreak
a(s)\mathbb{E}M_{n}(s)M_{j}(s)$, since $M_{j}$ is a martingale. On the right
hand side we have: $a\left(  t\right)  \mathbb{E}M_{n}(t)M_{j}(t)$. First
taking $j\allowbreak=\allowbreak n$ we get
\[
a(s)m_{n}(s)\allowbreak=\allowbreak a(t)m_{n}(t),
\]
from which follows that $a(s)m_{n}(s)$ does not depend on $s$. Moreover, since
the functions $m_{n}(t)$ are positive we deduce that the function $a(s) $ does
not change its sign hence it can be selected to be positive and we get the
first assertion. From the equality $\mathbb{E}M_{n}(s)M_{m}(s)/m_{n}%
(s)\allowbreak=\allowbreak\mathbb{E}M_{n}(t)M_{m}(t)/m_{n}(t)$ we get the second.

Now let us assume that (\ref{rwv_n}) holds. To show that $M_{n}(s)/m_{n}(s)$
is a reversed martingale we have to show that for all bounded functions $g$
measurable with respect to $\mathcal{F}_{\geq t}$ we have $\mathbb{E}%
gM_{n}(s)/m_{n}(s)\allowbreak=\allowbreak\mathbb{E}gM_{n}(t)/m_{n}(t)$. Lets
consider $g(u_{1}$,$\ldots$,$u_{n})\allowbreak=\allowbreak\prod_{j=1}%
^{n}M_{k_{j}}(u_{j})$, for $t\leq u_{1}\leq,\ldots,\leq u_{n}$. Then, we have
that
\[
\mathbb{E}M_{n}(s)g(u_{1},\ldots,u_{n})\allowbreak=\allowbreak m_{n}%
(s)\sum_{j=1}^{k_{1}+\ldots+k_{n}}\phi_{j}(u_{1},\ldots,u_{n-1}),
\]
by Lemma \ref{aux} and by our assumption. Similarly we show that:%
\[
\mathbb{E}M_{n}(t)g(u_{1},\ldots,u_{n})\allowbreak=\allowbreak m_{n}%
(t)\sum_{j=1}^{k_{1}+\ldots+k_{n}}\phi_{j}(u_{1},\ldots,u_{n-1}).
\]

Now notice that linear combinations of such functions $g$ are dense in the
space of functions measurable with respect to $\mathcal{F}_{\geq t}$ we deduce
by Dynkin's $\pi-\lambda$ Theorem that this is true for all functions $g$
measurable with respect to $\mathcal{F}_{\geq t}$.
\end{proof}

We have the following theorem

\begin{theorem}
\label{ort}Suppose that all functions $m_{n}(s)$ are different that is
$m_{n}(s)/m_{j}(s)\neq1$ for all $j\neq n$ and all $s$. Then, the following
statements are equivalent:

1. All martingales $\left\{  M_{n}(t)\right\}  _{n\geq1}$ are orthogonal with
respect to the marginal measure of $X_{t}$.

2. There exist continuous functions $a_{n}(t)$ such that $\left\{
a_{n}(s)M_{n}(s)\right\}  _{n\geq0}$ are reversed martingales.
\end{theorem}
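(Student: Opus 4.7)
The plan is to combine Proposition \ref{wst}, applied separately to each index $n$, with the symmetry of the bilinear form $(m,n)\mapsto \mathbb{E}M_{n}(s)M_{m}(s)$; the distinctness hypothesis on the $m_{n}$ then acts as a rigidity condition that forces the off-diagonal cross-moments to vanish.

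For the implication $(1)\Rightarrow (2)$ the argument is essentially immediate. Orthogonality of $\{M_{n}(t)\}_{n\geq 1}$ with respect to the marginal law of $X_{t}$ is exactly the statement $\mathbb{E}M_{n}(s)M_{m}(s)\allowbreak =\allowbreak \delta _{n,m}m_{n}(s)$, which is (\ref{rwv_n}) with $\chi _{m,n}\allowbreak =\allowbreak \delta _{m,n}$. The converse half of Proposition \ref{wst} then supplies the desired reversed martingales with $a_{n}(s)\allowbreak =\allowbreak 1/m_{n}(s)$.

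For $(2)\Rightarrow (1)$ I would apply the forward half of Proposition \ref{wst} to each index, obtaining constants $\chi _{m,n}$ such that $\mathbb{E}M_{n}(s)M_{m}(s)\allowbreak =\allowbreak \chi _{m,n}m_{n}(s)$. Interchanging the roles of $n$ and $m$ and invoking symmetry of the left-hand side in $(n,m)$ gives
\begin{equation*}
\chi _{m,n}m_{n}(s)\allowbreak =\allowbreak \chi _{n,m}m_{m}(s),\qquad s\in I.
\end{equation*}
For $m\neq n$ the hypothesis that the $m_{n}$ are pairwise distinct prohibits the existence of a nonzero constant $c$ with $m_{n}\allowbreak =\allowbreak cm_{m}$, so the only way this identity can hold for all $s$ is $\chi _{m,n}\allowbreak =\allowbreak 0$, which is the required orthogonality.

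The main obstacle is this last step: one must deduce $\chi _{m,n}\allowbreak =\allowbreak 0$ from a linear relation between the two positive continuous functions $m_{n}$ and $m_{m}$. Read literally, the pointwise inequality $m_{n}(s)/m_{m}(s)\neq 1$ does not by itself exclude $m_{n}\allowbreak =\allowbreak cm_{m}$ with $c\neq 1$, so the distinctness hypothesis must be understood in its natural stronger form as non-proportionality of any two variance functions; once this is granted, the argument reduces to a clean symmetrization of Proposition \ref{wst} applied twice.
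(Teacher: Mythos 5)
Your proof is correct and follows essentially the same route as the paper: both directions are obtained by applying Proposition \ref{wst}, with the implication $(2)\Rightarrow(1)$ resting on the symmetry relation $\chi _{m,n}m_{n}(s)\allowbreak =\allowbreak \chi _{n,m}m_{m}(s)$ and the distinctness of the $m_{n}$. Your closing caveat is also well taken: the paper's own argument tacitly uses the same strengthened (non-proportionality) reading of the hypothesis that you make explicit, since the literal condition $m_{n}(s)/m_{m}(s)\neq 1$ alone would not rule out $m_{n}\allowbreak =\allowbreak c\,m_{m}$ with $c\neq 1$.
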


\begin{proof}
First let us show that 2. implies 1. From Proposition \ref{wst} we have:%
\[
a_{n}(s)\mathbb{E}M_{n}(s)M_{j}(s)\allowbreak=\allowbreak a_{n}(t)\mathbb{E}%
M_{n}(t)M_{j}(t).
\]
However, the fact that all $a_{n}(s)M_{n}(s)$ are reversed martingales yields
that we have also:
\[
a_{j}(s)\mathbb{E}M_{n}(s)M_{j}(s)\allowbreak=\allowbreak a_{j}(t)\mathbb{E}%
M_{n}(t)M_{j}(t).
\]
Consequently, we deduce that $\mathbb{E}M_{n}(s)M_{j}(s)\allowbreak
=\allowbreak\chi_{j,n}m_{n}(s)\allowbreak=\allowbreak\chi_{n,j}m_{j}(s)$.
Since by assumption all functions $m_{n}(s)$ are different, we deduce that
$\mathbb{E}M_{n}(s)M_{m}(s)\allowbreak\allowbreak=\allowbreak0$ for all $m\neq
n$.

Hence, now let us assume that the polynomials $\left\{  M_{n}\right\}  $ are
orthogonal. Let us fix $n$ then, from the orthogonality assumption we have
$\forall j\geq0:$ $\mathbb{E}M_{n}(t)M_{j}\allowbreak(t)=\allowbreak0m_{n}%
(t)$. By Proposition \ref{wst} we deduce that $M_{n}$ is a reversed martingale.
\end{proof}

\begin{remark}
Following results of \cite{SzabLev} and \cite{SzabStac} one can state that
there exists MPR processes such that under some quite natural assumptions all
polynomial martingales $M_{n}$ can be selected to be orthogonal with respect
to the marginal measure. For example, stationary MPR processes have this
property. On the other hand, as shown in \cite{SzabLev} the only MPR L\'{e}vy
process with orthogonal martingales is a Wiener process. Moreover, martingale
$M_{2}$ for L\'{e}vy MPR process is a reversed martingale for only a very
specific marginal distribution. Hence, only a family of polynomial martingales
does exist for all MPR processes.
\end{remark}

\section{Harnesses related classes of Markov processes}

Now let us analyze conditions for the process $\mathbf{X}$ to be a harness.
This subclass of Markov processes has been introduced by Hammersley in
\cite{Ham67}. Some basic properties of them under additional simplifying
assumptions were discussed in e.g. \cite{BryMaWe07}, \cite{SzablPoly}. In both
papers it was assumed that the family of martingales $\left\{  M_{n}\right\}
$ constitutes also a family of orthogonal polynomials. Below we do not assume
this. We start only with the assumption that an analyzed process is a MPR-process.

Recall that a stochastic process is a harness iff
\[
\forall s<t<u:\mathbb{E}(X_{t}|\mathcal{F}_{s,u})\allowbreak=\allowbreak
\hat{a}(s,t,u)X_{s}+\hat{b}(s,t,u)X_{u}+c(s,t,u),
\]
for some $\hat{a}$,$\hat{b}$,$c$ being functions of $s$,$t$,$u$. Notice that
taking expectations of both sides we see that
\begin{equation}
c(x,t,u)\allowbreak=\allowbreak EX_{t}\allowbreak-\allowbreak\hat{a}%
\mathbb{E}X_{s}\allowbreak-\allowbreak\hat{b}\mathbb{E}X_{u}. \label{cc}%
\end{equation}

\subsection{Harnesses\label{HAR}}

Since $M_{1}(s)$ is linear function of $X_{s}$ we can assume that
$M_{1}(t)\allowbreak=\allowbreak A_{t}(X_{t}-\mathbb{E}X_{t})$ for some
nonzero $A_{t}$, since we have assumed that $\mathbb{E}M_{1}(s)\allowbreak
=\allowbreak0$. Further, $M_{1}$ is a martingale and thus we have:%
\begin{equation}
A_{t}\mathbb{E}(X_{t}-\mathbb{E}X_{t}|\mathcal{F}_{\leq s})\allowbreak
=\allowbreak A_{s}(X_{s}-EX_{s})\text{.} \label{mm}%
\end{equation}
Now taking into account (\ref{cc}) and (\ref{mm}) we deduce that one can
define harnesses in an equivalent way, namely:%
\begin{equation}
\forall s<t<u:\mathbb{E}(M_{1}(t)|\mathcal{F}_{s,u})\allowbreak=\allowbreak
a(s,t,u)M_{1}(s)+b(s,t,u)M_{1}(u), \label{har1}%
\end{equation}
almost surely with $a\allowbreak=\allowbreak A_{t}\hat{a}/A_{s}$ and
$b\allowbreak=\allowbreak A_{t}\hat{b}/A_{u}$.

Let us multiply both sides of (\ref{har1}) by $M_{1}(s)$. We have then,
$m_{1}(s)\allowbreak=\allowbreak am_{1}(s)+bm_{1}(s)$. Since the function
$m_{1}(s)$ is positive and increasing we deduce that
$a(s,t,u)+b(s,t,u)\allowbreak=\allowbreak1$.

Now let us multiply both sides of (\ref{har1}) by $M_{1}(u)$. We will get
then, $m_{1}(t)\allowbreak=\allowbreak am_{1}(s)+bm_{1}(u)$. So
\[
a(s,t,u)=\frac{m_{1}(u)-m_{1}(t)}{m_{1}(u)-m_{1}(s)},~~b(s,t,u)=\frac
{m_{1}(t)-m_{1}(s)}{m_{1}(u)-m_{1}(s)}.
\]

We have the following observation.

\begin{theorem}
\label{har}If process $\mathbf{X}$ is a harness then, \newline1. $\left\{
M_{1}(s)/m_{1}(s)\right\}  $ is a reversed martingale, \newline2. $\forall
n\geq1$ there exist constants $a_{j,n}$ , $b_{j,n}$, $j\allowbreak
=\allowbreak1$,$\ldots$,$n+1$ such that:
\begin{equation}
M_{1}(t)M_{n}(t)\allowbreak=\allowbreak\sum_{j=1}^{n+1}(\alpha_{j,n}%
m_{1}(t)+\beta_{j,n})M_{j}(t)+\chi_{n,1}m_{1}(t).\label{struc}%
\end{equation}

\end{theorem}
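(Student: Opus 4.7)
For Part 1, my plan is to verify condition (\ref{rwv_n}) for $n=1$ and then invoke the converse half of Proposition \ref{wst}. Multiplying the harness identity (\ref{har1}) by $M_n(u)$, which is $\mathcal{F}_{s,u}$-measurable, and taking expectation, the martingale property of the $M_n$'s collapses each of the three expectations so that $f_n(t):=\mathbb{E}(M_1(t)M_n(t))$ satisfies
\begin{equation*}
f_n(t)=a(s,t,u)f_n(s)+b(s,t,u)f_n(u).
\end{equation*}
Substituting the explicit expressions for $a,b$ derived above shows that $f_n$ depends affinely on $m_1$. The standing hypothesis $m_k(s)\to 0$ as $s\to l$ together with Cauchy--Schwarz gives $|f_n(s)|\leq\sqrt{m_1(s)m_n(s)}\to 0$, killing the constant term; hence $f_n(s)=\chi_{n,1}m_1(s)$ for some constant $\chi_{n,1}$. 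This is precisely condition (\ref{rwv_n}) with $n=1$ (in the role of the outer index), so the converse of Proposition \ref{wst} yields reverse-martingality of $\{M_1(s)/m_1(s)\}$.

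For Part 2, I would begin from the Remark's polynomial expansion $M_1(t)M_n(t)=\sum_{i=0}^{n+1} c_i(t)M_i(t)$ with continuous coefficient functions $c_i(t)$. Taking expectation and applying Part 1 identifies $c_0(t)=\mathbb{E}(M_1(t)M_n(t))=\chi_{n,1}m_1(t)$, accounting for the final summand of (\ref{struc}). To capture $c_i(t)$ for $i\geq 1$, the idea is to multiply (\ref{har1}) by the $\mathcal{F}_{s,u}$-measurable product $M_j(s)M_n(u)$ for $j\geq 1$ and take expectation. On the left-hand side, conditioning on $\mathcal{F}_t$ turns $M_n(u)$ into $M_n(t)$, and the above product expansion combined with the martingale property yields $\sum_i c_i(t)\mathbb{E}(M_j(s)M_i(s))$. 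The first term on the right-hand side reduces analogously to $a(s,t,u)\sum_i c_i(s)\mathbb{E}(M_j(s)M_i(s))$; for the second term one conditions on $\mathcal{F}_s$ and expands $M_1(u)M_n(u)$ at time $u$, the $i=0$ contribution vanishing because $\mathbb{E}M_j(s)=0$ for $j\geq 1$. The upshot is
\begin{equation*}
\sum_{i=1}^{n+1}\bigl(c_i(t)-a(s,t,u)c_i(s)-b(s,t,u)c_i(u)\bigr)\mathbb{E}(M_j(s)M_i(s))=0
\end{equation*}
for every $j\geq 1$. Specializing $j=1,\ldots,n+1$ and invoking invertibility of the Gram matrix $[\mathbb{E}(M_j(s)M_i(s))]_{1\leq j,i\leq n+1}$ (positive-definite by linear independence in $L^2$ of $M_1(s),\ldots,M_{n+1}(s)$) forces $c_i(t)=a(s,t,u)c_i(s)+b(s,t,u)c_i(u)$ for each $i=1,\ldots,n+1$. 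This is the same interpolation law satisfied by $m_1$, so $c_i$ is affine in $m_1$: $c_i(t)=\alpha_{i,n}m_1(t)+\beta_{i,n}$.

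I expect the main obstacle to be the careful bookkeeping of the several nested conditional expectations evaluating $\mathbb{E}(M_1(u)M_j(s)M_n(u))$ and aligning the polynomial expansions of $M_1\cdot M_n$ at the three distinct times $s,t,u$, since a priori the coefficients $c_i$ depend on time. A secondary point worth a line is the invertibility of the Gram matrix, which relies on the marginal distribution of $X_s$ having infinite support --- implicit in the standing assumption that a polynomial martingale $M_n(X_t,t)$ of degree exactly $n$ exists for every $n$.
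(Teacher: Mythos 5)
Your proposal is correct and follows essentially the same route as the paper: multiply (\ref{har1}) by $M_n(u)$, reduce to the interpolation identity $h(t)=a(s,t,u)\,h(s)+b(s,t,u)\,h(u)$, and conclude affineness in $m_1$ both for $\mathbb{E}(M_1(t)M_n(t))$ and for the coefficients of the expansion of $M_1M_n$ in the $M_j$'s, using the vanishing at the left boundary to fix the constants. The only (harmless) variation is in Part 2, where you isolate the coefficients by pairing against $M_j(s)$ and inverting the Gram matrix, whereas the paper takes $\mathbb{E}(\,\cdot\,\vert\mathcal{F}_{\leq s})$ and matches coefficients using almost-sure linear independence of $M_1(s),\ldots,M_{n+1}(s)$ --- the same nondegeneracy fact in both guises.
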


\begin{proof}
Shifted to Section \ref{dow}.
\end{proof}

As a corollary we get the following result.

\begin{theorem}
\label{ortog}If all polynomials $M_{i}(t)$ are orthogonal, then condition
(\ref{struc}) is also sufficient for the process $\mathbf{X}$ to be a harness.
\end{theorem}

\begin{proof}
Shifted to Section \ref{dow}.
\end{proof}

\begin{remark}
Notice that condition (\ref{struc}) is in fact a generalization of necessary
conditions that the 3-term recurrence of the family of orthogonal polynomial
martingales have to satisfy so as the process this family defines to be a
harness (compare results of \cite{SzablPoly}). Here we do not have orthogonal
martingales hence, there are no 3-term recurrences. However the fact that the
coefficients of the $M_{1}M_{n}$ expansion are linear functions of $m_{1}$
remains true.
\end{remark}

\subsection{Quadratic harnesses\label{QHAR}}

The class of quadratic harnesses (QH) has been intensively studied in recent
years by W. Bryc, J. Weso\l owski and occasionally by W. Matysiak in several
papers \cite{brwe05}, \cite{BryMaWe07}, \cite{BryWe}, \cite{BryBo},
\cite{BryMaWe}, \cite{BryWe10} under some more or less restricting and
regularizing assumptions of which the most important was the postulated
existence of the family of orthogonal polynomial martingales of the specific
type with linear dependence on $t$ of the 3-term recurrence coefficients. In
\cite{SzablPoly} Szab\l owski studied quadratic harnesses under the less
restricting assumption of existence of orthogonal polynomial martingales and
the additional assumption that the transitional distribution is absolutely
continuous with respect to the marginal distributions and more over that the
Radon--Nikodym derivative of these distributions is square integrable with
respect to the marginal one. Of course, generalization of the results of Bryc
\& Weso\l owski were obtained. In the present paper we study existence of
quadratic harnesses under no restrictions except of course, from the
assumption that it belongs to MPR class.

Let us recall that in \cite{SzablPoly} the following, slightly more general
than in the works of Bryc and Weso\l owski, definition of a QH was used.

The process $\mathbf{X}$ is QH if it is a harness and almost surely
\begin{equation}
\mathbb{E}(M_{2}(t)|\mathcal{F}_{s,u})=AM_{2}(s)+BM_{1}(s)M_{1}(u)+CM_{2}%
(u)+DM_{1}(s)+EM_{1}(u)+F, \label{QH}%
\end{equation}
for some functions $A$, $B$, $C$, $D$, $E$, $F$ of $s$,$t$,$u$. Immediately
notice that since $\mathbb{E}(M_{i}(s))\allowbreak=\allowbreak0$, $i=1$,$2$,
we have: $Bm_{1}(s)\allowbreak+\allowbreak F\allowbreak=\allowbreak0$.

To proceed further, let us assume in the sequel that the martingales $M_{1}$
and $M_{2}$ are orthogonal i.e. $\forall t\in I:\mathbb{E}(M_{1}%
(t)M_{2}(t))\allowbreak=\allowbreak0$.

\begin{lemma}
Suppose that a MPR process $\mathbf{X}$ is a harness then, keeping in mind the
following definitions of parameters $\alpha_{i,j}$ $\beta_{i,j}$, $i=1$%
,$2$,$3$, $j=1$,$2$ and $\chi_{3,1}$ that follow assumption that $\mathbf{X}$
is a harness:%
\begin{equation}
M_{1}(t)^{2}\allowbreak=\allowbreak(\alpha_{2,1}m_{1}(t)+\beta_{2,1}%
)M_{2}(t)\allowbreak+\allowbreak(\alpha_{1,1}m_{1}(t)+\beta_{1,1}%
)M_{1}(t)+m_{1}(t), \label{p1}%
\end{equation}%
\begin{align}
M_{1}(t)M_{2}(t)\allowbreak &  =\allowbreak(\alpha_{3,2}m_{1}(t)+\beta
_{3,2})M_{3}(t)+(\alpha_{2,2}m_{1}(t)+\beta_{2,2})M_{2}(t)\label{p2}\\
&  +(\alpha_{1,2}m_{1}(t)+\beta_{1,2})M_{1}(t),\nonumber
\end{align}
\begin{equation}
\mathbb{E}M_{3}(t)M_{1}(t)\allowbreak=\allowbreak\chi_{3,1}m_{1}(t),
\label{p3}%
\end{equation}
and denoting for simplicity $\hat{a}\allowbreak=\allowbreak\alpha_{3,2}%
\chi_{3,1}+\alpha_{1,2}$, $a\allowbreak=\allowbreak\beta_{3,2}\chi
_{3,1}\allowbreak+\allowbreak\beta_{1,2}$, we have:

1.%
\begin{align*}
\mathbb{E}(M_{1}^{2}(t))\allowbreak &  =\allowbreak m_{1}(t),~\mathbb{E}%
(M_{1}(t)M_{2}(t))\allowbreak=\allowbreak0,\\
\mathbb{E}(M_{1}^{3}(t)\allowbreak &  =\allowbreak(\alpha_{1,1}m_{1}%
(t)\allowbreak+\allowbreak\beta_{1,1})\allowbreak m_{1}(t),\\
\mathbb{E}(M_{1}^{2}(s)M_{2}(s))\allowbreak &  =\allowbreak m_{1}(s)(\hat
{a}m_{1}(s)+a),
\end{align*}

2.
\begin{align*}
m_{2}(t)\allowbreak &  =\allowbreak\frac{(\hat{a}m_{1}(t)+a)}{(\alpha
_{2,1}m_{1}(t)+\beta_{2,1})}m_{1}(t)\allowbreak,\\
\mathbb{E}(M_{1}^{2}(t)M_{1}^{2}(s))\allowbreak &  =\allowbreak m_{1}%
(s)((\alpha_{2,1}m_{1}(t)+\beta_{2,1})(\hat{a}m_{1}(s)+a)\allowbreak\\
&  +\allowbreak(\alpha_{1,1}m_{1}(t)+\beta_{1,1})(\alpha_{1,1}m_{1}%
(s)+\beta_{1,1})+\allowbreak\allowbreak m_{1}(t)).
\end{align*}

\end{lemma}

\begin{proof}
Keeping in mind that $\mathbb{E}M_{i}(t)\allowbreak=\allowbreak0$ and our
assumption is valid and looking also on expansions (\ref{p1})-(\ref{p3}) we
see that 1. is trivially true. To see that the first statement of 2. is true
we compute the $\mathbb{E}(M_{2}(t)M_{1}^{2}(t))$ in two ways. Firstly by
multiplying (\ref{p1}) by $M_{2}(t)$ and taking expectation, secondly by
multiplying (\ref{p2}) by $M_{1}(t)$ and taking expectation.
\end{proof}

\begin{proposition}
\label{QR}Suppose that MPR process $\mathbf{X}$ is a quadratic harness then,
functions $A$, $B$, $C$, $D$, $E$ $F$ defined by (\ref{QH}) are given by the
following formulae.:%
\begin{gather}
A\allowbreak=h\frac{(a_{21}m(s)+b_{21})(b_{21}a\kappa-b_{21}\hat{a}%
(\lambda-\kappa)m(t)+b_{21}\hat{a}\kappa m(u)+a_{12}\hat{a}\kappa
m(u)m(t))}{((a_{21}m(t)+b_{21})(b_{21}a\kappa-b_{21}\hat{a}(\lambda
-\kappa)m(s)+b_{21}\hat{a}\kappa m(u)+a_{12}\hat{a}\kappa m(u)m(t))}%
,\label{Aa}\\
B=\frac{(m(t)-m(s))\lambda b_{21}\hat{a}}{(a_{21}m(t)+b_{21})(b_{21}%
a\kappa-b_{21}\hat{a}(\lambda-\kappa)m(s)+b_{21}\hat{a}\kappa m(u)+a_{12}%
\hat{a}\kappa m(u)m(t))},\label{Bb}\\
C=(1-h)\frac{(a_{21}m(u)+b_{21})(b_{21}a\kappa-b_{21}\hat{a}(\lambda
-\kappa)m(s)+b_{21}\hat{a}\kappa m(t)+a_{12}\hat{a}\kappa m(s)m(t))}%
{(a_{21}m(t)+b_{21})(b_{21}a\kappa-b_{21}\hat{a}(\lambda-\kappa)m(s)+b_{21}%
\hat{a}\kappa m(u)+a_{12}\hat{a}\kappa m(u)m(t))},\label{Cc}\\
D=-b_{11}B,~~E=-a_{11}Bm(s),~~F=-Bm(s), \label{Dd}%
\end{gather}
where we denoted
\begin{align*}
m(.)\allowbreak &  =\allowbreak m_{1}(.),~\hat{a}\allowbreak=\allowbreak
\alpha_{3,2}\chi_{3,1}+\alpha_{1,2},\\
a\allowbreak &  =\allowbreak\beta_{3,2}\chi_{3,1}\allowbreak+\allowbreak
\beta_{1,2},~\kappa\allowbreak=\allowbreak(1+a_{11}b_{11}+a_{21}a),\\
\lambda\allowbreak &  =\allowbreak(b_{21}\hat{a}-a_{21}a),
\end{align*}
and
\[
h=h(s,t,u)\allowbreak=\allowbreak\frac{m(u)-m(t)}{m(u)-m(s)}.
\]

\end{proposition}

\begin{proof}
Shifted to Section \ref{dow}.
\end{proof}

As a corollary we obtain the following nice result.

\begin{theorem}
\label{rev}If $\mathbf{X}$ is a quadratic harness then, $M_{2}$ is a reversed martingale.
\end{theorem}

\begin{proof}
Shifted to Section \ref{dow}.
\end{proof}

\section{Proofs\label{dow}}

\begin{proof}
[Proof of the Theorem \ref{har}]$1$. Let us multiply both sides of
(\ref{har1}) by $M_{n}(u)$. We get then,
\[
\mathbb{E}M_{1}(t)M_{n}(t)\allowbreak=\allowbreak\frac{m_{1}(u)-m_{1}%
(t)}{m_{1}(u)-m_{1}(s)}\mathbb{E}M_{1}(s)M_{n}(s)+\frac{m_{1}(t)-m_{1}%
(s)}{m_{1}(u)-m_{1}(s)}\mathbb{E}M_{1}(u)M_{n}(u).
\]
For simplicity, let us denote $h_{n}(t)\allowbreak=\allowbreak\mathbb{E}%
M_{1}(t)M_{n}(t)$. Notice that the above mentioned equality is equivalent to
to the following:%
\[
h_{n}(t)-h_{n}(s)\allowbreak=\allowbreak\frac{m_{1}(t)-m_{1}(s)}%
{m_{1}(u)-m_{1}(s)}(h_{n}(u)-h_{n}(s),
\]
for $s<t<u$. Thus, we deduce that $\frac{h_{n}(t)-h_{n}(s)}{m_{1}(t)-m_{1}%
(s)}$ does not depend on $t$. Consequently, $h_{n}(t)\allowbreak=\allowbreak
h_{n}(s)\allowbreak+\allowbreak C_{n}(s)(m_{1}(t)-m_{1}(s))$ for some constant
$C_{n}(s)$ for all $s<t<u$. Besides, by our assumptions $h_{n}(0)\allowbreak
=\allowbreak0$ and $m_{1}(0)\allowbreak=\allowbreak0$. Hence, we deduce that
$h_{n}(s)\allowbreak=\allowbreak\beta_{n}m_{1}(s)$. By Proposition \ref{wst}
this yields that $M_{1}(s)/m_{1}(s)$ is a reversed martingale.

2. Since $M_{n}(t)$ are polynomials we deduce that there exist $n+1$
continuous functions $\left\{  \delta_{j,n}(t)\right\}  _{j=0}^{n+1}$ such
that
\begin{equation}
M_{1}(t)M_{n}(t)\allowbreak=\allowbreak\sum_{j=0}^{n+1}\delta_{j,n}%
(t)M_{j}(t). \label{pom1}%
\end{equation}
Since $\mathbf{X}$ is a harness, we deduce by the previous theorem that
$\delta_{0,n}(t)\allowbreak=\allowbreak\chi_{n,1}m_{1}(t)$. To show that
$\delta_{j,n}(t)\allowbreak=\allowbreak\alpha_{j,n}m_{1}(t)\allowbreak
+\allowbreak\beta_{j,n}$ for some reals $\alpha_{j,n}$, $\beta_{j,n}$ let us
multiply (\ref{har1}) by $M_{n}(u)$ and then, calculate conditional
expectation with respect to $\mathcal{F}_{\leq s}$. We get then:%
\[
\mathbb{E(}M_{1}(t)M_{n}(t)|\mathcal{F}_{\leq s})\allowbreak=\allowbreak
a(s,t,u)M_{1}(s)M_{n}(s)+b(s,t,u)\mathbb{E(}M_{1}(u)M_{n}(u)|\mathcal{F}_{\leq
s}).
\]
Now apply (\ref{pom1}) and use the fact that $M_{j}(t)$, $j\allowbreak
=\allowbreak1,\ldots,n+1$ are martingales, thus:%

\begin{align*}
\sum_{j=1}^{n+1}\delta_{j,n}(t)M_{j}(s)\allowbreak+\beta_{0,n}m_{1}(t)  &
=\allowbreak a(s,t,u)\sum_{j=1}^{n+1}\delta_{j,n}(s)M_{j}(s)\allowbreak
+\chi_{n,1}m_{1}(s)\\
&  +b(s,t,u)\sum_{j=1}^{n+1}\delta_{j,n}(u)M_{j}(s)\allowbreak+\chi_{n,1}%
m_{1}(u).
\end{align*}
Since polynomials $M_{i}$ are linearly independent we deduce that the
functions $\delta_{j,n}$ have to satisfy the following $n+1$ equations:%
\[
\delta_{j,n}(t)\allowbreak=\allowbreak\frac{m_{1}(u)-m_{1}(t)}{m_{1}%
(u)-m_{1}(s)}\delta_{j,n}(s)+\frac{m_{1}(t)-m_{1}(s)}{m_{1}(u)-m_{1}(s)}%
\delta_{j,n}(u),
\]
for $j\allowbreak=\allowbreak1$,$\ldots$,$n+1$. Subtracting $\delta_{j,n}(s)$
from both sides of the above equality we get:
\[
\frac{\delta_{j,n}(t)-\delta_{j,n}(s)}{m_{1}(t)-m_{1}(s)}\allowbreak
=\allowbreak\frac{\delta_{j,n}(u)-\delta_{j,n}(s)}{m_{1}(u)-m_{1}(s)}.
\]
We deduce that $\frac{\delta_{j,n}(t)-\delta_{j,n}(s)}{m_{1}(t)-m_{1}(s)}$
does not depend on $t$ so $\delta_{j,n}(t)\allowbreak=\allowbreak\delta
_{j,n}(s)+f_{j,n}(s)(m_{1}(t)-m_{1}(s))$ for some $f_{j,n}(s)$. Now notice
that for $s=\allowbreak0$ we have $m_{1}(0)\allowbreak=\allowbreak
\allowbreak0$ hence, $\delta_{j,n}(t)\allowbreak=\allowbreak\beta
_{j,n}\allowbreak+\allowbreak\alpha_{j,n}m_{1}(t)$, where we denoted
$\delta_{j,n}(0)\allowbreak=\allowbreak\beta_{j,n}$ and $f_{j,n}%
(0)\allowbreak=\allowbreak\alpha_{j,n}$.
\end{proof}

\begin{proof}
[Proof of Theorem \ref{ortog}]First of all notice that since polynomials
$\left\{  M_{i}\right\}  $ are assumed to be orthogonal, condition
(\ref{struc}) takes a for of 3-term recurrence%
\begin{gather*}
M_{1}(t)M_{n}(t)\allowbreak=\allowbreak(\hat{\alpha}_{n+1}m_{1}(t)+\alpha
_{n+1})M_{n+1}(t)+\\
(\hat{\beta}_{n}m_{1}(t)+\beta_{n})M_{n}(t)+(\hat{\gamma}_{n-1}m_{1}%
(t)+\gamma_{n-1})M_{n-1}(t),
\end{gather*}
for $n\geq2$. Recall that also following Theorem \ref{ort}, all martingales
$M_{n}(s)$ multiplied by $1/m_{n}(s)$ are also reversed martingales. If
$n\allowbreak=\allowbreak1$ we have
\[
M_{1}^{2}(t)\allowbreak=\allowbreak(\hat{\alpha}_{2}m_{1}(t)+\alpha_{2}%
)M_{2}(t)+(\hat{\beta}_{1}m_{1}(t)+\beta_{1})M_{1}(t)+m_{1}(t).
\]
Hence, we deduce that $\hat{\alpha}_{1}\allowbreak=\allowbreak\gamma
_{0}\allowbreak=\allowbreak0$, $\alpha_{1}\allowbreak=\allowbreak\hat{\gamma
}_{0}\allowbreak=\allowbreak1$. Secondly notice that for $\sigma\leq
s<t<u\leq\upsilon$%
\begin{gather*}
\mathbb{E}(\mathbb{E}(M_{n}(\sigma)M_{1}(t)M_{k}(\upsilon)|\mathcal{F}%
_{s,u})\allowbreak)=\allowbreak\frac{m_{n}(\sigma)}{m_{n}(t)}\mathbb{E}%
(M_{n}(t)M_{1}(t)M_{k}(t))\\
=\frac{m_{n}(\sigma)}{m_{n}(t)}\mathbb{E}(M_{n}(t)((\hat{\alpha}_{k+1}%
m_{1}(t)+\alpha_{k+1})M_{k+1}(t)\\
+(\hat{\beta}_{k}m_{1}(t)+\beta_{k})M_{k}(t)+(\hat{\gamma}_{k-1}%
m_{1}(t)+\gamma_{k-1})M_{k-1}(t))\\
=\left\{
\begin{array}
[c]{ccc}%
m_{n}(\sigma)(\hat{\alpha}_{n}m_{1}(t)+\alpha_{n}) & if & n\allowbreak
=\allowbreak k+1\\
m_{n}(\sigma)(\hat{\beta}_{n}m_{1}(t)+\beta_{n}) & if & n=k\\
m_{n}(\sigma)(\hat{\gamma}_{n}m_{1}(t)+\gamma_{n}) & if & n=k-1\\
0 & if & n\notin\{k-1,k,k+1\}
\end{array}
.\right.
\end{gather*}
On the other hand, by similar reasoning
\begin{align*}
&  \mathbb{E}(\mathbb{E}(M_{n}(\sigma)M_{1}(s)M_{k}(\upsilon)|\mathcal{F}%
_{s,u})\allowbreak)\allowbreak\\
&  =\allowbreak\left\{
\begin{array}
[c]{ccc}%
m_{n}(\sigma)(\hat{\alpha}_{n}m_{1}(s)+\alpha_{n}) & if & n\allowbreak
=\allowbreak k+1\\
m_{n}(\sigma)(\hat{\beta}_{n}m_{1}(s)+\beta_{n}) & if & n=k\\
m_{n}(\sigma)(\hat{\gamma}_{n}m_{1}(s)+\gamma_{n}) & if & n=k-1\\
0 & if & n\notin\{k-1,k,k+1\}
\end{array}
,\right.
\end{align*}
and%
\[
\mathbb{E}(\mathbb{E}(M_{n}(\sigma)M_{1}(u)M_{k}(\upsilon)|\mathcal{F}%
_{s,u})\allowbreak)\allowbreak\allowbreak=\allowbreak\allowbreak\left\{
\begin{array}
[c]{ccc}%
m_{n}(\sigma)(\hat{\alpha}_{n}m_{1}(u)+\alpha_{n}) & if & n\allowbreak
=\allowbreak k+1\\
m_{n}(\sigma)(\hat{\beta}_{n}m_{1}(u)+\beta_{n}) & if & n=k\\
m_{n}(\sigma)(\hat{\gamma}_{n}m_{1}(u)+\gamma_{n}) & if & n=k-1\\
0 & if & n\notin\{k-1,k,k+1\}
\end{array}
.\right.
\]
Consequently, we see that we have for any $n$,$m\in\mathbb{N}_{0}$%
\begin{gather*}
\mathbb{E}(M_{n}(\sigma)M_{1}(s)M_{k}(\upsilon)|\mathcal{F}_{s,u})=\\
a(s,t,u)\mathbb{E}(M_{n}(\sigma)M_{1}(s)M_{k}(\upsilon)|\mathcal{F}%
_{s,u})+b(s,t,u)\mathbb{E}(M_{n}(\sigma)M_{1}(u)M_{k}(\upsilon)|\mathcal{F}%
_{s,u}).
\end{gather*}
Now we have to refer to way of reasoning used in the proof of Proposition
\ref{wst}. Let us consider $\sigma_{1}<\sigma_{2}\leq s<t<u\leq\upsilon
_{2}<\upsilon_{1}$. Then,
\begin{gather*}
\mathbb{E}(M_{n_{1}}(\sigma_{1})M_{n_{2}}(\sigma_{2})M_{1}(t)M_{k_{2}%
}(\upsilon_{2})M_{k_{1}}(\upsilon_{1}))\allowbreak=\allowbreak\\
\frac{m_{n_{1}}(\sigma_{1})}{m_{n_{1}}(\sigma_{2})}\mathbb{E((}\sum
_{j=0}^{n_{1}+n_{2}}\phi_{j,n_{1},n_{2}}(\sigma_{2})M_{j}(\sigma_{2}%
))M_{1}(t)\sum_{j=0}^{m_{1}+m_{2}}\phi_{j,m_{1},m_{2}}(\upsilon_{1}%
)M_{j}(t))\\
=\frac{m_{n_{1}}(\sigma_{1})}{m_{n_{1}}(\sigma_{2})}\sum_{j=0}^{n_{1}+n_{2}%
}\phi_{j,n_{1},n_{2}}(\sigma_{2})\frac{m_{j}(\sigma_{2})}{m_{j}(t)}\sum
_{i=0}^{m_{1}+m_{2}}\phi_{j,m_{1},m_{2}}(\upsilon_{1})\mathbb{E}(M_{j}%
(t)M_{1}(t)M_{i}(t))\\
=\frac{m_{n_{1}}(\sigma_{1})}{m_{n_{1}}(\sigma_{2})}\sum_{j=0}^{n_{1}+n_{2}%
}\phi_{j,n_{1},n_{2}}(\sigma_{2})m_{j}(\sigma_{2})\sum_{i=0}^{m_{1}+m_{2}}%
\phi_{j,m_{1},m_{2}}(\upsilon_{1})\\
\times\left\{
\begin{array}
[c]{ccc}%
\hat{\alpha}_{n}m_{1}(t)+\alpha_{n} & if & j=i+1\\
\hat{\beta}_{n}m_{1}(t)+\beta_{n} & if & j=i\\
\hat{\gamma}_{n}m_{1}(t)+\gamma_{n} & if & j=i-1\\
0 & if & j\notin\{i-1,i,i+1\}
\end{array}
\right.  .
\end{gather*}
Similarly, for $\mathbb{E}(M_{n_{1}}(\sigma_{1})M_{n_{2}}(\sigma_{2}%
)M_{1}(s)M_{k_{2}}(\upsilon_{2})M_{k_{1}}(\upsilon_{1}))\allowbreak$ and
\newline$\mathbb{E}(M_{n_{1}}(\sigma_{1})M_{n_{2}}(\sigma_{2})M_{1}%
(u)M_{k_{2}}(\upsilon_{2})M_{k_{1}}(\upsilon_{1}))$. Using a previous result
we see that also in this case equation (\ref{har1}) is satisfied and so on for
any finite products of the form $\mathbb{E}(\prod_{j=1}^{l}M_{n_{j}}%
(\sigma_{j})M_{1}(t)\prod_{j=1}^{\lambda}M_{m_{j}}(\upsilon_{j})$ for
$\sigma_{1}<\ldots\sigma_{l}\leq s<t<u\leq\upsilon_{\lambda}<\ldots
<\upsilon_{1}$. Finally we utilize the fact that linear combinations of
functions $\prod_{j=1}^{l}M_{n_{j}}(\sigma_{j})\prod_{j=1}^{\lambda}M_{m_{j}%
}(\upsilon_{j})$ are dense in the space of square integrable functions
measurable w.r. to $\mathcal{F}_{s,u}$.
\end{proof}

\begin{proof}
[Proof of Proposition \ref{QR}]First we multiply both sides of (\ref{QH})
successively by $M_{1}(s)$, $M_{1}(u)$, $M_{1}(s)M_{1}(u)$ , $M_{2}(s)$,
$M_{2}(u)$ and then, we take expectation. We get that:
\begin{gather*}
\mathbb{E}M_{2}(s)M_{1}(s)\allowbreak=\allowbreak A\mathbb{E}M_{2}%
(s)M_{1}(s)+B\mathbb{E}M_{1}^{3}(s)+C\mathbb{E}M_{2}(s)M_{1}(s)+(D+E)m_{1}%
(s),\\
\mathbb{E}M_{2}(t)M_{1}(t)=A\mathbb{E}M_{2}(s)M_{1}(s)+B\frac{m_{1}(s)}%
{m_{1}(u)}\mathbb{E}M_{1}^{3}(u)\\
+C\mathbb{E}M_{2}(u)M_{1}(u)+Dm_{1}(s)+Em_{1}(u),\\
\frac{m_{1}(s)}{m_{1}(t)}\mathbb{E}M_{1}^{2}(t)M_{2}(t)=A\mathbb{E}M_{1}%
^{2}(s)M_{2}(s)+B\mathbb{E}M_{1}^{2}(s)M_{1}^{2}(u)\\
+\frac{m_{1}(s)}{m_{1}(u)}C\mathbb{E}M_{1}^{2}(u)M_{2}(u)+D\mathbb{E}M_{1}%
^{3}(s)+E\frac{m_{1}(s)}{m_{1}(u)}\mathbb{E}M_{1}^{3}(u)-Bm_{1}^{2}(s),\\
m_{2}(s)\allowbreak=\allowbreak(A+C)m_{2}(s)+B\mathbb{E}M_{1}^{2}%
(s)M_{2}(s)+(D+E)\mathbb{E}M_{1}(s)M_{2}(s),\\
m_{2}(t)=Am_{2}(s)+Cm_{2}(u)+\frac{m_{1}(s)}{m_{1}(u)}B\mathbb{E}M_{1}%
^{2}(u)M_{2}(u)+(\frac{m_{1}(s)}{m_{1}(u)}D+E)\mathbb{E}M_{1}(u)M_{2}(u).
\end{gather*}
Using our assumption we get.%
\begin{align*}
0  &  =B(\alpha_{1,1}m_{1}(s)\allowbreak+\allowbreak\beta_{1,1})+D+E,\\
0  &  =Bm_{1}(s)(\alpha_{1,1}m_{1}(u)\allowbreak+\allowbreak\beta
_{1,1})+Dm_{1}(s)+Em_{1}(u),
\end{align*}
from which it follows that $D\allowbreak=\allowbreak-\beta_{1,1}B$ and
$E\allowbreak=\allowbreak-\alpha_{1,1}m_{1}(s)B$ (to compare (\ref{Dd})),
\[
m_{2}(t)\allowbreak=\allowbreak\frac{(\alpha_{3,2}m_{1}(t)+\beta_{3,2}%
)}{(\alpha_{2,1}m_{1}(t)+\beta_{2,1})}\chi_{3,1}m_{1}(t)\allowbreak
+\frac{(\alpha_{1,2}m_{1}(t)+\beta_{1,2})}{(\alpha_{2,1}m_{1}(t)+\beta_{2,1}%
)}m_{1}(t).
\]
Now we utilize our assumptions and cancel out $m_{1}(s)$ in third, fourth and
fifth equation:
\begin{gather*}
((\alpha_{3,2}m_{1}(t)+\beta_{3,2})\chi_{3,1}+(\alpha_{1,2}m_{1}%
(t)+\beta_{1,2}))\\
=A((\alpha_{3,2}m_{1}(s)+\beta_{3,2})\chi_{3,1}+(\alpha_{1,2}m_{1}%
(s)+\beta_{1,2}))\\
+B((\alpha_{2,1}m_{1}(u)+\beta_{2,1})((\alpha_{3,2}m_{1}(s)+\beta_{3,2}%
)\chi_{3,1}+\\
(\alpha_{1,2}m_{1}(s)+\beta_{1,2}))+(\alpha_{1,1}m_{1}(u)+\beta_{1,1}%
)(\alpha_{1,1}m_{1}(s)+\beta_{1,1})\allowbreak+\allowbreak m_{1}(u))\\
+C(\alpha_{3,2}m_{1}(u)+\beta_{3,2})\chi_{3,1}+(\alpha_{1,2}m_{1}%
(u)+\beta_{1,2})\\
-\beta_{1,1}B(\alpha_{1,1}m_{1}(s)\allowbreak+\allowbreak\beta_{1,1}%
)-\alpha_{1,1}m_{1}(s)B(\alpha_{1,1}m_{1}(u)\allowbreak+\allowbreak\beta
_{1,1})-Bm_{1}(s),\\
(1-A-C)=B(\alpha_{2,1}m_{1}(s)+\beta_{2,1}),\\
m_{1}(t)(\frac{(\alpha_{3,2}m_{1}(t)+\beta_{3,2})}{(\alpha_{2,1}m_{1}%
(t)+\beta_{2,1})}\chi_{3,1}\allowbreak+\allowbreak\frac{(\alpha_{1,2}%
m_{1}(t)+\beta_{1,2})}{(\alpha_{2,1}m_{1}(t)+\beta_{2,1})})=\\
Am_{1}(s)(\frac{(\alpha_{3,2}m_{1}(s)+\beta_{3,2})}{(\alpha_{2,1}%
m_{1}(s)+\beta_{2,1})}\chi_{3,1}\allowbreak+\allowbreak\frac{(\alpha
_{1,2}m_{1}(s)+\beta_{1,2})}{(\alpha_{2,1}m_{1}(s)+\beta_{2,1})})\\
+Cm_{1}(u)(\frac{(\alpha_{3,2}m_{1}(u)+\beta_{3,2})}{(\alpha_{2,1}%
m_{1}(u)+\beta_{2,1})}\chi_{3,1}\allowbreak\\
+\allowbreak\frac{(\alpha_{1,2}m_{1}(u)+\beta_{1,2})}{(\alpha_{2,1}%
m_{1}(u)+\beta_{2,1})})+m_{1}(s)B((\alpha_{3,2}m_{1}(u)+\beta_{3,2})\chi
_{3,1}+(\alpha_{1,2}m_{1}(u)+\beta_{1,2})).
\end{gather*}

Solving these these 3 equations, we obtain (\ref{Aa})-(\ref{Cc}).
\end{proof}

\begin{proof}
[Proof of Theorem \ref{rev}.]Set $n\geq3$ and let us multiply (\ref{QH}) by
$M_{n}(u)$ and integrate. We get then,%
\begin{gather*}
\mathbb{E}(M_{2}(t)M_{n}(t)\allowbreak=\allowbreak A\mathbb{E}(M_{2}%
(s)M_{n}(s))+\frac{m_{1}(s)}{m_{1}(u)}B\mathbb{E}(M_{1}^{2}(u)M_{n}%
(u)+C\mathbb{E}(M_{2}(u)M_{n}(u))\\
+D\chi_{n,1}m_{1}(s)+E\chi_{n,1}m_{1}(u).
\end{gather*}
Let us denote for simplicity $h_{n}(t)\allowbreak=\allowbreak\mathbb{E}%
(M_{2}(t)M_{n}(t))$. Using (\ref{p1}) we get%
\begin{gather*}
h_{n}(t)\allowbreak=\allowbreak Ah_{n}(s)+Ch_{n}(u)+\frac{m_{1}(s)}{m_{1}%
(u)}B(\alpha_{2,1}m_{1}(u)+\beta_{2,1})h_{n}(u)\\
+\frac{m_{1}(s)}{m_{1}(u)}B(\alpha_{1,1}m_{1}(u)+\beta_{1,1})m_{1}%
(u)\chi_{n,1}+D\chi_{n,1}m_{1}(s)+E\chi_{n,1}m_{1}(u).
\end{gather*}
Now notice that taking into account (\ref{Bb}) and (\ref{Dd}) we have that
\[
\frac{m_{1}(s)}{m_{1}(u)}B(\alpha_{1,1}m_{1}(u)+\beta_{1,1})m_{1}(u)\chi
_{n,1}\allowbreak+\allowbreak D\chi_{n,1}m_{1}(s)\allowbreak+\allowbreak
E\chi_{n,1}m_{1}(u)\allowbreak=\allowbreak0.
\]
Hence, we have to prove that only solution of the equation:%
\[
h_{n}(t)\allowbreak=\allowbreak Ah_{n}(s)+Ch_{n}(u)+\frac{m_{1}(s)}{m_{1}%
(u)}B(\alpha_{2,1}m_{1}(u)+\beta_{2,1})h_{n}(u),
\]
among the continuous functions $h_{n}($.$)$ is $\chi_{n,2}m_{n}($,$)$. To
prove this notice that the following three identities hold for all $s<t<u$:%
\begin{gather*}
A+B(\alpha_{2,1}m_{1}(s)+\beta_{2,1})+C=1,\\
Am_{1}(s)\hat{a}+B(m_{1}(u)\kappa+m_{1}(s)(\lambda-\kappa+\alpha_{21}a)+\\
\alpha_{21}\hat{a}m_{1}(s)m_{1}(u))+C\hat{a}m_{1}(u)=\hat{a}m_{1}(t),\\
Am_{2}(s)+\frac{m_{1}(s)}{m_{1}(u)}Bm_{2}(u)(\alpha_{2,1}m_{1}(u)+\beta
_{2,1})+Cm_{2}(u)=m_{2}(t).
\end{gather*}
We treat these identities as a system of linear equations in $(A$,$B$,$C)$
with matrix:%
\[
Mm=\left[
\begin{array}
[c]{ccc}%
1 & (\alpha_{2,1}m_{1}(s)+\beta_{2,1}) & 1\\
m_{1}(s)\hat{a} & (m_{1}(u)\kappa+m_{1}(s)(\lambda-\kappa+\alpha_{21}%
a)+\alpha_{21}\hat{a}m_{1}(s)m_{1}(u)) & \hat{a}m_{1}(u)\\
m_{2}(s) & \frac{m_{1}(s)}{m_{1}(u)}m_{2}(u)(\alpha_{2,1}m_{1}(u)+\beta
_{2,1}) & m_{2}(u)
\end{array}
\right]
\]
whose determinant is equal to:%
\begin{align*}
&  -\alpha_{21}\hat{a}\kappa m_{1}(s)m_{1}(u)^{2}\allowbreak+\allowbreak
\alpha_{21}\hat{a}(\kappa+\mathcal{\alpha}_{21}(a-1))m_{1}(s)^{2}%
m_{1}(u)\allowbreak\\
&  -\allowbreak\beta_{21}\hat{a}(\lambda-\kappa-\mathcal{\alpha}%
_{21}(a-1))m_{1}(s)^{2}\allowbreak+\allowbreak\beta_{21}\hat{a}\kappa
m_{1}(u)^{2}\allowbreak+\allowbreak\beta_{12}\hat{a}(\beta_{21}\hat{a}%
-\alpha_{21})m_{1}(s)m_{1}(u)\allowbreak\\
&  -\allowbreak\beta_{21}a\kappa m_{1}(u)\allowbreak+\allowbreak\beta
_{21}a(\kappa+\alpha_{21}(a-1))m_{1}(s).
\end{align*}
This determinant is in fact a polynomial in $m_{1}(s)$ and $m_{1}(u)$ of order
$3$ with not all coefficients equal to zero and as such is equal to zero only
for $u$ and $s$ from a set of measure zero. Hence, there exists only one
solution of this system of equations which is known. Thus, if the third row of
the matrix was replaced by some functions $(l(s)$,$\frac{m_{1}(s)}{m_{1}%
(u)}l(u)(\alpha_{2,1}m_{1}(u)+\beta_{2,1})$,$l(u))$ with $l(s)\neq\chi
_{n,2}m_{2}(s)$ for some $\chi_{n,2}$, we would have got a different solution,
which is not the case. Hence, $h_{n}(s)\allowbreak=\allowbreak\chi_{n,2}%
m_{2}(s)$ which by Proposition \ref{wst}, implies that $M_{2}(s)$ is a
reversed martingale.
\end{proof}

\end{document}